\documentclass[12pt]{amsart}

\usepackage[a4paper]{geometry}

\usepackage{amsmath}
\usepackage{amsthm}
\usepackage{amsfonts}
\usepackage{amssymb}
\usepackage{mathabx}

\usepackage{graphicx}
\usepackage{epstopdf}
\usepackage{caption}
\usepackage{subcaption}

\usepackage[nodayofweek]{datetime}
\usepackage{hyperref}
\usepackage{hyphenat}

\newtheorem{theorem}{Theorem}[section]
\newtheorem{lemma}[theorem]{Lemma}
\newtheorem{proposition}[theorem]{Proposition}

\theoremstyle{definition}

\newtheorem{claim}[theorem]{Claim}
\newtheorem{remark}[theorem]{Remark}

\newcommand{\C}{\mathcal{C}}
\newcommand{\D}{\mathcal{D}}

\title{Uniform quasiconvexity of the disc graphs in the curve graphs}
\author{Kate M. Vokes}
\address{Mathematics Institute, University of Warwick, Coventry, CV4 7AL, United Kingdom}
\email{k.m.vokes@warwick.ac.uk}

\thanks{Date: \today}

\begin{document}

\maketitle

\begin{abstract}
We give a proof that there exists a universal constant $K$ such that the disc graph associated to a surface $S$ forming a boundary component of a compact, orientable 3-manifold $M$ is $K$-quasiconvex in the curve graph of $S$.
Our proof does not require the use of train tracks.
\end{abstract}

\section{Introduction}

Given a closed, connected, orientable surface $S$, the associated \emph{curve graph}, $\C(S)$, has as vertex set the set of isotopy classes of essential
simple closed curves in $S$, with an edge between two distinct vertices if the corresponding isotopy classes have representatives that are disjoint.
The definition is due to Harvey \cite{harvey}.
The curve graph has been a significant tool in the study of mapping class groups, Teichm\"uller spaces and hyperbolic 3\hyp{}manifolds.
If $S$ has genus at least 2, $\C(S)$ is connected, and Masur and Minsky showed in \cite{mm1} that it has infinite diameter and is hyperbolic in the sense of Gromov.
More recently, it was shown in independent proofs by Aougab \cite{aouhyp}, Bowditch \cite{bowhyp}, Clay, Rafi and Schleimer \cite{crs} and Hensel, Przytycki and Webb \cite{hpw} that the constant of hyperbolicity can be chosen to be independent of the surface $S$.

When $S$ is a boundary component of a compact, orientable 3-manifold $M$, we can consider the subset of the vertex set of $\C(S)$ which consists of those curves which bound embedded discs in $M$.
Equivalently, by Dehn's lemma, these are the essential simple closed curves in $S$ which are homotopically trivial in $M$.
The \emph{disc graph}, $\D(M, S)$, is the full subgraph spanned by these vertices.
This has applications to the study of handlebody groups and Heegaard splittings.
In a further paper of Masur and Minsky \cite{mm3}, it was proved that $\D(M, S)$ is $K$\hyp{}quasiconvex in $\C(S)$ (see Section 2 for a definition), for some $K$ depending only on the genus of $S$.
The proof relies on a study of nested train track sequences.
More specifically, to any pair of vertices of $\D(M, S)$, Masur and Minsky associate a sequence of curves in $\D(M, S)$, and a nested train track sequence whose vertex cycles are close in $\C(S)$ to the curves of this sequence.
They prove that the sets of vertex cycles of nested train track sequences are quasiconvex in $\C(S)$, and the result follows.

This result was improved by Aougab, who showed in \cite{aou} that the constants of quasiconvexity for nested train track sequences can be taken to be quadratic in the complexity of the surface, obtaining as a corollary that there exists a function $K(g)=O(g^2)$ such that $\D(M, S)$ is $K(g)$\hyp{}quasiconvex in $\C(S)$, where $g$ is the genus of $S$.
That this bound can be taken to be uniform in the genus of $S$ follows from work of Hamenst\"adt \cite{hamdiscs}.
In Section 3 of \cite{hamdiscs}, it is shown that the sets of vertex cycles of train track splitting sequences give unparametrised quasigeodesics in $\C(S)$ with constants independent of the surface $S$.
Along with the uniform hyperbolicity of the curve graphs, this implies that such subsets are uniformly quasiconvex in $\C(S)$.
In this note, we give a direct proof of the uniform quasiconvexity of $\D(M, S)$ in $\C(S)$, without using train tracks.

\begin{theorem} \label{uniform}
There exists $K$ such that, for any compact, orientable 3-manifold $M$ and boundary component $S$ of $M$, the disc graph, $\D(M, S)$, is $K$-quasiconvex in $\C(S)$.
\end{theorem}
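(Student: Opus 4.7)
The plan is to leverage the uniform hyperbolicity of $\C(S)$, established by Aougab, Bowditch, Clay--Rafi--Schleimer and Hensel--Przytycki--Webb, together with the following standard criterion from hyperbolic geometry: in a $\delta$-hyperbolic geodesic metric space, a subset $Y$ is uniformly quasiconvex whenever any two points of $Y$ can be joined by a path in $Y$ that is a uniform unparametrised quasigeodesic in the ambient space. I will therefore aim to associate, to every pair $\alpha,\beta\in\D(M,S)$, a path of disc curves from $\alpha$ to $\beta$ that is an unparametrised quasigeodesic in $\C(S)$ with constants independent of $M$ and $S$.

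The construction is by iterated disc surgery. Fix properly embedded disc representatives $D_\alpha,D_\beta$ in $M$, in general position so that $D_\alpha\cap D_\beta$ is a disjoint union of properly embedded arcs (closed curves of intersection can first be removed by an innermost-disc argument in $M$) with $|\partial D_\alpha\cap\partial D_\beta|=i(\alpha,\beta)$. Choose an outermost arc $a$ of this intersection inside $D_\alpha$, cutting off a subdisc $E\subset D_\alpha$ whose interior is disjoint from $D_\beta$. Cutting $D_\beta$ along $a$ and regluing a parallel copy of $E$ on each side produces two properly embedded discs; a standard essentiality check shows that at least one of them has essential boundary $d_1\in\D(M,S)$, with $i(d_1,\beta)<i(\alpha,\beta)$. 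Iterating gives a sequence $\alpha=d_0,d_1,\dots,d_n$ in $\D(M,S)$ terminating with $i(d_n,\beta)=0$, so $d_n$ lies at $\C(S)$-distance at most $1$ from $\beta$. By always retaining the surgery output with smaller intersection with $\beta$ one can arrange geometric decay such as $i(d_{j+1},\beta)\le\frac{1}{2}i(d_j,\beta)$.

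The main obstacle is to show that $\{d_j\}$ really is an unparametrised quasigeodesic in $\C(S)$, with uniform constants. One direction, the \emph{progress} estimate, follows from the classical bound $d_{\C(S)}(x,y)\le 2\log_2 i(x,y)+c$ combined with the geometric decay of $i(d_j,\beta)$. The harder direction is a \emph{no-backtracking} bound: one must prove that every $d_j$ lies within a uniformly bounded $\C(S)$-distance of any geodesic $[\alpha,\beta]$ in $\C(S)$. My plan here is to adapt the unicorn argument of Hensel--Przytycki--Webb: alongside the $d_j$ I would introduce auxiliary \emph{bicorn} curves formed from a subarc of $\partial d_j$ and a subarc of $\beta$, observe that each bicorn is disjoint from the next disc curve $d_{j+1}$ by construction, and prove a slim-triangle/Bowditch-style estimate showing that these bicorns are uniformly close to every $\C(S)$-geodesic from $\alpha$ to $\beta$. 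The disc curves $d_j$ then inherit this closeness from the neighbouring bicorns via the triangle inequality.

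Once such uniform unparametrised quasigeodesics in $\D(M,S)$ exist between arbitrary pairs of disc curves, uniform quasiconvexity of $\D(M,S)$ in $\C(S)$ follows at once from stability of quasigeodesics in $\delta$-hyperbolic spaces, applied with the universal hyperbolicity constant for the curve graphs. The resulting constant $K$ depends only on that universal hyperbolicity constant and on the constants entering the bicorn/surgery estimates, and is therefore independent of the 3-manifold $M$ and the boundary surface $S$.
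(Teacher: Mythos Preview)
Your overall strategy---disc surgery plus bicorns plus uniform hyperbolicity---is exactly the paper's, but you are missing the one observation that makes the argument go through cleanly, and in its absence your ``no-backtracking'' step has a real gap.

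The paper does \emph{not} treat the surgery sequence as a generic path to be proven quasigeodesic after the fact. Instead it chooses the surgeries so that every intermediate disc curve is \emph{itself} a bicorn between the original $\alpha$ and $\beta$. Concretely, one performs \emph{nested wave replacements}: at each stage one takes a wave $w\subset\beta$ (an outermost $\beta$-arc relative to the current disc) and replaces along a subarc $J_{i+1}\subset J_i\subset\alpha$, so that $\alpha_i=w\cup J_i$ is always one arc of $\alpha$ and one arc of $\beta$. Thus the whole path lies in $\Theta(\alpha,\beta)$, the set of bicorns between $\alpha$ and $\beta$, and the Przytycki--Sisto estimate (that $\Theta(\alpha,\beta)$ is uniformly Hausdorff-close to any $\C(S)$-geodesic from $\alpha$ to $\beta$) applies directly. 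A short connectedness argument then shows every geodesic from $\alpha$ to $\beta$ lies in a uniform neighbourhood of the path, and hence of $\D(M,S)$. No intersection-number decay and no separate quasigeodesic verification are needed.

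By contrast, your iteration (surger, then surger the result, etc.) does not guarantee that $d_j$ is a bicorn between the \emph{original} $\alpha$ and $\beta$; after the first step the curve is built from pieces of $\alpha$ and $\beta$, but further surgeries need not respect the ``one arc of each'' form unless you enforce nesting of the $\alpha$-arcs. Your proposed fix---auxiliary bicorns built from a subarc of $\partial d_j$ and a subarc of $\beta$---only places those bicorns near geodesics from $d_j$ to $\beta$, not near the fixed geodesic $[\alpha,\beta]$; so as written this does not give the no-backtracking bound. (There is also a minor slip in your surgery description: to decrease $i(\,\cdot\,,\beta)$ one should take an outermost subdisc of $D_\beta$ and surger the $\alpha$-side, not the other way round.) The simple repair is exactly the paper's: arrange the surgeries to be nested so that every $d_j\in\Theta(\alpha,\beta)$, and then invoke the bicorn Hausdorff estimate once.
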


For the main case, where the genus of $S$ is at least 2, this uses an observation that the disc surgeries of \cite{mm3} give a path of ``bicorn curves'', as described by Przytycki and Sisto in \cite{przs}.
These were introduced by analogy with the ``unicorn arcs'' of \cite{hpw} in order to give a short surgery proof of the uniform hyperbolicity of the curve graphs.
The lower genus case is straight-forward, and is discussed in Section \ref{low genus}.

\subsubsection*{Acknowledgements.} I am grateful to my supervisor, Brian Bowditch, for many helpful discussions and suggestions.
I would also like to thank Francesca Iezzi and Saul Schleimer for interesting conversations.
This work was supported by an Engineering and Physical Sciences Research Council Doctoral Award.

\section{Preliminaries} \label{prelim}

A simple closed curve in a surface $S$ is said to be \emph{essential} if it does not bound a disc in $S$.
We will assume from now on that all curves are essential simple closed curves.
Two curves, $\alpha$ and $\beta$, intersecting transversely, are in \emph{minimal position} if the number of their intersection points is minimal over all pairs $\alpha'$, $\beta'$ isotopic to $\alpha$, $\beta$ respectively.
This is equivalent to the condition that $\alpha$ and $\beta$ do not form a bigon, that is, an embedded disc in $S$ whose boundary is a union of one arc of each of $\alpha$ and $\beta$.
Abusing notation, we shall also denote the isotopy class of a curve $\alpha$ by $\alpha$.
The intersection number, $i(\alpha, \beta)$, is the number of intersections between representatives of the isotopy classes of $\alpha$ and $\beta$ which are in minimal position.

If $Y$ is a subset of a geodesic metric space $X$, we denote the closed $K$\hyp{}neighbourhood of $Y$ in $X$ by $N_X(Y, K)$.
We say that $Y$ is \emph{$K$-quasiconvex} in $X$ if, for any two points $y$ and $y'$ in $Y$, any geodesic in $X$ joining $y$ and $y'$ is contained within $N_X(Y, K)$.
The metric on the curve graph, $\C(S)$, is given by setting each edge to have length 1, and we denote the distance between vertices $\alpha$ and $\beta$ by $d_S(\alpha, \beta)$.
This makes $\C(S)$ into a geodesic metric space.

Given two curves $\alpha$ and $\beta$ in minimal position, a \emph{bicorn curve} between $\alpha$ and $\beta$ is a curve constructed from one arc $a$ of $\alpha$ and one arc $b$ of $\beta$ such that $a$ and $b$ meet precisely at their endpoints (see Figure \ref{bicorns} and \cite{przs}).
Since $\alpha$ and $\beta$ are in minimal position, all bicorn curves between them will be essential, as otherwise $\alpha$ and $\beta$ would form a bigon.

\begin{figure}[h!]
\begin{subfigure}[b]{0.35\textwidth}
\centering
\includegraphics[width=\textwidth]{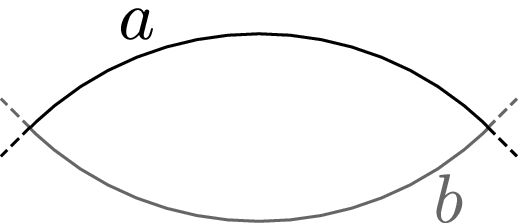}
\end{subfigure}
\qquad
\begin{subfigure}[b]{0.35\textwidth}
\centering
\includegraphics[width=\textwidth]{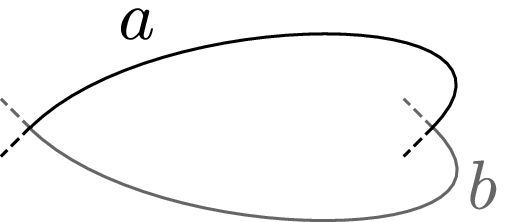}
\end{subfigure}
\caption{Examples of bicorn curves.}
\label{bicorns}
\end{figure}

\section{Exceptional cases} \label{low genus}

Let $M$ be a compact, orientable 3\hyp{}manifold.
If a boundary component $S$ has genus at most one then the associated disc graph, $\D(M, S)$, is very simple.
Firstly, since there are no essential simple closed curves on the sphere, the curve graph of the sphere is empty, so we can ignore any sphere boundary components.
We shall see that for a torus boundary component $S$, $\D(M, S)$ contains at most one vertex.

The curve graph of the torus contains infinitely many vertices, but, with the definition given above, is not connected, since any non-isotopic curves must intersect.
We instead use a standard modification and define two vertices to be adjacent if they have representative curves which intersect exactly once.
The resulting graph is the \emph{Farey graph}, which is connected.

Suppose $S$ is a torus boundary component of the 3\hyp{}manifold $M$.
Suppose an essential curve $\delta$ in $S$ bounds an embedded disc $D$ in $M$.
Take a closed regular neighbourhood $N$ of $S \cup D$ in $M$.
This is homeomorphic to a solid torus with an open ball removed.
Suppose some other curve $\delta'$ in $S$ bounds an embedded disc $D'$ in $M$.
We can assume that $D'$ intersects the sphere boundary component of $N$ transversely in simple closed curves.
Repeatly performing surgeries along innermost discs to reduce the number of such curves eventually gives a disc with boundary $\delta'$ which is completely contained in $N$.
Therefore, an essential curve in $S$ bounds an embedded disc in $M$ if and only if it bounds an embedded disc in $N$.
In $S$, there is, up to isotopy, no curve other than $\delta$ which bounds an embedded disc in $N$, since such a curve must be trivial in the first homology group of $N$.
We hence find that for any torus boundary component $S$, $\D(M, S)$ is at most a single point.
In this case, $\D(M, S)$ is convex in the curve graph of $S$.

\section{Proof of the main result}

Now let $S$ be a boundary component of genus at least 2 of a compact, orientable $3$\hyp{}manifold $M$, and $\mathcal{D}(M, S)$ the associated disc graph.

The following criterion for hyperbolicity appears in several places in slightly different forms, for example as Theorem 3.15 of \cite{ms}.
For our purposes, the important result is the final clause on Hausdorff distances, which appears in the statement of Proposition 3.1 of \cite{bowhyp}.

This criterion is used in \cite{przs} to show that the curve graphs of closed surfaces of genus at least 2 are uniformly hyperbolic.

\begin{proposition} \label{criterion}
For all $h \ge 0$, there exist $k$ and $R$ such that the following holds.
Let $G$ be a connected graph with vertex set $V(G)$.
Suppose that for every \mbox{$x, y \in V(G)$} there is a connected subgraph \mbox{$\mathcal{L}(x, y) \subseteq G$,} containing $x$ and $y$, with the following properties:
\begin{enumerate}
\item for any $x, y \in V(G)$ with $d_G(x, y) \le 1$, the diameter of $\mathcal{L}(x, y)$ in $G$ is at most $h$,
\item for all $x, y, z \in V(G)$, $\mathcal{L}(x, y) \subseteq N_G(\mathcal{L}(x, z) \cup \mathcal{L}(y, z), h)$.
\end{enumerate}
Then $G$ is $k$-hyperbolic.
Furthermore, for all $x, y \in V(G)$, the Hausdorff distance between $\mathcal{L}(x, y)$ and any geodesic from $x$ to $y$ is at most $R$.
\end{proposition}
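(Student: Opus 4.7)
My approach would follow the template of Bowditch and Masur--Schleimer for such hyperbolicity criteria, treating each $\mathcal{L}(x, y)$ as a coarse connector from $x$ to $y$, with condition (2) serving as a coarse thin-triangle property.

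The technical backbone is the following inductive lemma: for any vertices $x, y \in V(G)$ and any path $p = (x_0, \ldots, x_n)$ in $G$ from $x$ to $y$,
\[
\mathcal{L}(x, y) \subseteq N_G\bigl(p,\, h\lceil \log_2 n\rceil + h\bigr).
\]
I would prove this by induction on $n$, with base case $n = 1$ being exactly condition (1). For the inductive step, split $p$ at its midpoint $z = x_{\lfloor n/2 \rfloor}$; the inductive hypothesis applied to both sub-paths places $\mathcal{L}(x, z)$ and $\mathcal{L}(z, y)$ in an $(h\lceil \log_2(n/2)\rceil + h)$-neighbourhood of $p$, after which condition (2) for the triangle $(x, y, z)$ inflates the bound by a further $h$. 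Applied with $p$ a geodesic, this gives $\mathcal{L}(x, y) \subseteq N_G([x, y],\, O(\log d_G(x, y)))$.

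Hyperbolicity would then be deduced by verifying that geodesic triangles are slim. Given a geodesic triangle with vertices $x, y, z$, condition (2) applied to the $\mathcal{L}$-triangle, combined with the logarithmic estimate relating each $\mathcal{L}$ to its corresponding geodesic side, shows that geodesic triangles in $G$ are slim up to a constant that is logarithmic in their diameter. A standard argument, for example via Gromov's four-point inequality, upgrades this logarithmically-slim property to genuine $k$-hyperbolicity for some $k = k(h)$.

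The final Hausdorff distance clause is obtained by bootstrapping within the now-established $k$-hyperbolic graph: the logarithmic estimate can be improved to a constant $R = R(k, h)$ by iterating condition (2) along the geodesic. The hardest part, I expect, will be establishing that the geodesic $[x, y]$ is itself contained in a \emph{uniform} neighbourhood of $\mathcal{L}(x, y)$, rather than merely the direction provided by the inductive lemma. Closing the loop requires using hyperbolicity in an essential way, typically via an iterative argument that peels pieces from the geodesic and returns to $\mathcal{L}(x, y)$ in a bounded number of applications of (2); without hyperbolicity the recursion would degrade with diameter, so the order of the three steps (inductive lemma, hyperbolicity, constant Hausdorff bound) is essential.
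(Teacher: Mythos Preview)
The paper does not supply a proof of this proposition; it is quoted as a known criterion, with references to Theorem~3.15 of \cite{ms} and Proposition~3.1 of \cite{bowhyp}, and used as a black box. Your sketch follows the standard route taken in those references: a bisection argument giving the logarithmic estimate $\mathcal{L}(x,y)\subseteq N_G([x,y],\,O(h\log d))$, then hyperbolicity, then a bootstrap to the uniform Hausdorff bound.

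One remark on the internal logic of your outline. You flag the inclusion $[x,y]\subseteq N_G(\mathcal{L}(x,y),R)$ as the hardest step and say it requires hyperbolicity in an essential way. In fact, once you have the forward inclusion $\mathcal{L}(x,y)\subseteq N_G([x,y],R_1)$ for any $R_1$, the reverse inclusion with bound roughly $2R_1$ follows immediately from the \emph{connectedness} hypothesis on $\mathcal{L}(x,y)$, by exactly the argument the paper uses later in Lemma~\ref{claim3}; no hyperbolicity is needed. This same observation is also what is missing in your step deducing log-slim geodesic triangles: condition~(2) together with the log estimate takes you from $\mathcal{L}(x,y)$ to a neighbourhood of $[x,z]\cup[y,z]$, but you have not yet moved from a point of $[x,y]$ into $\mathcal{L}(x,y)$. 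Inserting the connectedness argument there removes the apparent circularity. The genuinely delicate remaining step is then upgrading the \emph{forward} inclusion from logarithmic to constant once hyperbolicity is in hand; your appeal to a sublinear-implies-hyperbolic principle and a subsequent bootstrap is the right shape, though the details of that bootstrap are where the work in \cite{bowhyp} actually lies.
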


We first note that a slightly more general statement is true.

\begin{claim} \label{coarsely connected}
The requirement that each $\mathcal{L}(x, y)$ be connected can be replaced by the weaker condition that there exist $h'$ such that, for all $x, y \in V(G)$, $N_G(\mathcal{L}(x, y), h')$ is connected.
\end{claim}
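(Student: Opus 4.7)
The natural plan is to reduce to Proposition~\ref{criterion} itself by thickening each $\mathcal{L}(x,y)$ to a genuinely connected subgraph. For each $x,y \in V(G)$, I would define $\mathcal{L}'(x,y)$ to be the full subgraph of $G$ induced on the vertex set $N_G(\mathcal{L}(x,y), h')$. By hypothesis this is connected; it contains $x$ and $y$ since $\mathcal{L}(x,y)$ does; and it satisfies $\mathcal{L}(x,y) \subseteq \mathcal{L}'(x,y) \subseteq N_G(\mathcal{L}(x,y), h')$, so the Hausdorff distance in $G$ between $\mathcal{L}(x,y)$ and $\mathcal{L}'(x,y)$ is at most $h'$.

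Next I would check that the family $\{\mathcal{L}'(x,y)\}$ satisfies the two hypotheses of Proposition~\ref{criterion} with a single enlarged constant, for instance $H := h + 2h'$. For condition (1), when $d_G(x,y) \le 1$, any two vertices of $\mathcal{L}'(x,y)$ lie within $G$-distance $h'$ of $\mathcal{L}(x,y)$, which itself has $G$-diameter at most $h$, so the $G$-diameter of $\mathcal{L}'(x,y)$ is at most $h + 2h' = H$. For condition (2), given $v \in \mathcal{L}'(x,y)$, pick $u \in \mathcal{L}(x,y)$ with $d_G(u,v) \le h'$; the original hypothesis supplies $w \in \mathcal{L}(x,z) \cup \mathcal{L}(y,z) \subseteq \mathcal{L}'(x,z) \cup \mathcal{L}'(y,z)$ with $d_G(u,w) \le h$, whence $d_G(v,w) \le h + h' \le H$. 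Proposition~\ref{criterion} then applies to $\mathcal{L}'$ and produces constants $k$ and $R$, depending only on $h$ and $h'$, such that $G$ is $k$-hyperbolic and the Hausdorff distance between $\mathcal{L}'(x,y)$ and any geodesic from $x$ to $y$ is at most $R$.

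A final triangle-inequality step, combining the bound $R$ on the Hausdorff distance between $\mathcal{L}'(x,y)$ and any geodesic with the bound $h'$ between $\mathcal{L}'(x,y)$ and $\mathcal{L}(x,y)$, upgrades this to Hausdorff distance at most $R + h'$ between $\mathcal{L}(x,y)$ and the geodesic, giving the desired conclusion. The argument is essentially bookkeeping; the only mild obstacle is choosing a single constant that simultaneously handles conditions (1) and (2) for the thickened family, which $H = h + 2h'$ does as above.
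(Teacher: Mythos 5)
Your proposal is correct and follows essentially the same route as the paper: thicken each $\mathcal{L}(x,y)$ to $N_G(\mathcal{L}(x,y),h')$, verify the two hypotheses of Proposition \ref{criterion} with constants depending only on $h$ and $h'$, and transfer the Hausdorff-distance conclusion back to $\mathcal{L}(x,y)$ by adding $h'$. The only cosmetic difference is that the paper keeps the constant $h$ in condition (2) for the thickened sets, while you use the single constant $h+2h'$ for both conditions; either choice works.
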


To see this, suppose subgraphs $\mathcal{L}(x, y)$ satisfy all the hypotheses of Proposition \ref{criterion}, except that the connectedness assumption is replaced as described in the claim.
Define $\mathcal{L}'(x, y) = N_G(\mathcal{L}(x, y), h')$.
This is a connected subgraph of $G$ containing $x$ and $y$.
For any $x, y \in V(G)$ with $d_G(x, y) \le 1$, the diameter of $\mathcal{L}'(x, y)$ in $G$ is at most $h+2h'$, and for any $x, y, z \in V(G)$, $\mathcal{L}'(x, y) \subseteq N_G(\mathcal{L}'(x, z) \nolinebreak \cup \nolinebreak \mathcal{L}'(y, z), \nolinebreak h)$.
Hence, the conclusion of Proposition \ref{criterion} holds, except with constants now depending on $h$ and $h'$, proving Claim \ref{coarsely connected}.

Given two curves $\alpha$ and $\beta$ in $S$, we shall define $\Theta(\alpha, \beta)$ to be the set containing the isotopy classes of $\alpha$, $\beta$ and all bicorn curves between $\alpha$ and $\beta$.

Przytycki and Sisto define in \cite{przs} an ``augmented curve graph'', $\C_{aug}(S)$, where two curves are adjacent if they intersect at most twice.
Such curves cannot fill $S$ (which has genus at least 2) so are at distance at most 2 in $\C(S)$.
Given two curves $\alpha$ and $\beta$ in minimal position, $\eta(\alpha, \beta)$ is defined in \cite{przs} to be the full subgraph of $\mathcal{C}_{aug}(S)$ spanned by $\Theta(\alpha, \beta)$.
This is shown to be connected for all $\alpha$ and $\beta$.
It is further verified that the hypotheses of Proposition \ref{criterion} are satisfied when $G$ is $\C_{aug}(S)$, $\mathcal{L}(\alpha, \beta)$ is $\eta(\alpha, \beta)$ for each $\alpha$, $\beta$, and $h$ is 1, independently of the surface \nolinebreak $S$.

Since $\eta(\alpha, \beta)$ is connected, for any $\gamma, \gamma' \in \Theta(\alpha, \beta)$, there is a sequence $\gamma=\gamma_0, \gamma_1, \dots, \gamma_n=\gamma'$ of curves in $\Theta(\alpha, \beta)$, where $d_S(\gamma_{i-1}, \gamma_i) \le 2$ for each $1 \nolinebreak \le \nolinebreak i \le \nolinebreak n$.
Hence, $N_{\C(S)}(\Theta(\alpha, \beta), 1)$ is a connected subgraph of $\C(S)$.
Moreover, if $d_S(\alpha, \beta) \le 1$, then $\alpha$ and $\beta$ are disjoint, so $\Theta(\alpha, \beta)$ contains no other curves and its diameter in $\C(S)$ is at most 1.
Finally, since $\eta(\alpha, \beta) \subset N_{\C_{aug}(S)}(\eta(\alpha, \delta) \cup \eta(\beta, \delta), 1)$ for any curves $\alpha$, $\beta$, $\delta$, we have $\Theta(\alpha, \beta) \subset N_{\C(S)}(\Theta(\alpha, \delta) \cup \Theta(\beta, \delta), 2)$.

Using Proposition \ref{criterion} with the modification of Claim \ref{coarsely connected}, this proves the following lemma.

\begin{lemma} \label{hausdorff distance}
There exists $R$ such that, for any closed surface $S$ of genus at least 2, and any curves $\alpha$, $\beta$ in $S$, the Hausdorff distance in $\C(S)$ between $\Theta(\alpha, \beta)$ and any geodesic in $\C(S)$ joining $\alpha$ and $\beta$ is at most $R$.
\end{lemma}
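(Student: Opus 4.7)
The plan is to apply Proposition \ref{criterion}, in the form generalised by Claim \ref{coarsely connected}, to $G = \C(S)$ with the family of subgraphs $\mathcal{L}(\alpha, \beta) = \Theta(\alpha, \beta)$. The final clause of Proposition \ref{criterion} is exactly the Hausdorff bound we want, so once the hypotheses are verified the lemma will follow, with $R$ the output constant of that proposition; in particular it depends only on $h$ and $h'$, hence is uniform in $S$.

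My first task will be to verify coarse connectedness, namely that $N_{\C(S)}(\Theta(\alpha, \beta), 1)$ is connected. The key input is from \cite{przs}: the subgraph $\eta(\alpha, \beta)$ of $\C_{aug}(S)$ spanned by $\Theta(\alpha, \beta)$ is connected. Edges of $\C_{aug}(S)$ join curves intersecting at most twice, and since $S$ has genus at least $2$ such a pair fails to fill and thus admits a common disjoint curve, so its endpoints are at distance at most $2$ in $\C(S)$ and share a common $\C(S)$-neighbour. Stringing these common neighbours together along an $\eta(\alpha, \beta)$-path between any two elements of $\Theta(\alpha, \beta)$ yields a path in $N_{\C(S)}(\Theta(\alpha, \beta), 1)$, giving Claim \ref{coarsely connected} with $h' = 1$.

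Next I would verify the two numbered hypotheses of Proposition \ref{criterion}. For condition (1): if $d_S(\alpha, \beta) \le 1$ then $\alpha$ and $\beta$ are disjoint and in minimal position, so there are no nontrivial bicorn arcs between them, $\Theta(\alpha, \beta) = \{\alpha, \beta\}$, and its diameter in $\C(S)$ is at most $1$. For condition (2): the inclusion $\eta(\alpha, \beta) \subset N_{\C_{aug}(S)}(\eta(\alpha, \delta) \cup \eta(\beta, \delta), 1)$ proved in \cite{przs} transfers, via the same distance-two comparison between $\C_{aug}(S)$ and $\C(S)$, to the inclusion $\Theta(\alpha, \beta) \subset N_{\C(S)}(\Theta(\alpha, \delta) \cup \Theta(\beta, \delta), 2)$, so taking $h = 2$ suffices.

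The real work has already been absorbed into the cited results from \cite{przs} on $\eta$ in $\C_{aug}(S)$, so I do not expect a major obstacle here. The only point requiring care is tracking constants when passing from $\C_{aug}(S)$ to $\C(S)$: each $\C_{aug}$-edge costs at most a factor of two in $\C(S)$-distance, so the constants $h$ and $h'$ remain finite and independent of $S$, and Proposition \ref{criterion} then delivers a uniform $R$.
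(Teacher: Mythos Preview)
Your proposal is correct and matches the paper's argument essentially line for line: the paper likewise transfers the Przytycki--Sisto verification of Proposition~\ref{criterion} for $\eta(\alpha,\beta)$ in $\C_{aug}(S)$ to $\Theta(\alpha,\beta)$ in $\C(S)$, obtaining $h'=1$ for coarse connectedness and $h=2$ from the slim-triangle inclusion, and then invokes Claim~\ref{coarsely connected}.
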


We now show that, moreover, any geodesic between $\alpha$ and $\beta$ in $\C(S)$ lies in a uniform neighbourhood of any path within $\Theta(\alpha, \beta)$ connecting $\alpha$ and $\beta$.

\begin{lemma} \label{claim3}
Let $\alpha$, $\beta$ be two curves in $S$, $P(\alpha, \beta)$ a path from $\alpha$ to $\beta$ in $\mathcal{C}(S)$ with all vertices in $\Theta(\alpha, \beta)$, and $g$ a geodesic in $\C(S)$ joining $\alpha$ and $\beta$.
Then $g$ is contained in the $(2R+2)$\hyp{}neighbourhood of $P(\alpha, \beta)$.
\end{lemma}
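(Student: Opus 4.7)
The plan is to use Lemma \ref{hausdorff distance} to push each vertex of $P(\alpha,\beta)$ to a nearby vertex of $g$, obtaining a sequence of vertices along $g$ whose consecutive gaps are controlled; an interval-covering argument along the geodesic then yields the stated bound.

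Concretely, I write $P(\alpha,\beta) = \gamma_0, \gamma_1, \dots, \gamma_n$ with $\gamma_0 = \alpha$, $\gamma_n = \beta$, each $\gamma_i \in \Theta(\alpha, \beta)$, and $d_S(\gamma_{i-1}, \gamma_i) \le 1$. By Lemma \ref{hausdorff distance}, for each $i$ I choose a vertex $x_i$ of $g$ with $d_S(\gamma_i, x_i) \le R$, taking $x_0 = \alpha$ and $x_n = \beta$ (which already lie on $g$). The triangle inequality at once gives $d_S(x_{i-1}, x_i) \le 2R + 1$ for every $1 \le i \le n$.

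The second step is to parametrise $g$ by arclength, so that each vertex of $g$ carries an integer parameter in $[0, L]$ with $L := d_S(\alpha, \beta)$; let $t_i$ be the parameter of $x_i$. Then $t_0 = 0$, $t_n = L$, and $|t_i - t_{i-1}| \le 2R + 1$. The sequence $(t_i)$ need not be monotonic, but given any vertex $v$ of $g$ at parameter $t$, I take $i$ to be the least index with $t_i \ge t$. Then $t_{i-1} < t \le t_i$, so $\min(t - t_{i-1},\, t_i - t) \le (2R+1)/2$. Hence for a suitable $j \in \{i-1, i\}$ one has $d_S(v, x_j) \le R + 1$, and a further triangle inequality yields $d_S(v, \gamma_j) \le 2R + 1 \le 2R + 2$, as required.

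Essentially all the substantive content is already contained in Lemma \ref{hausdorff distance}; what remains is a straightforward shadowing of the path $P$ onto the geodesic $g$, and I do not anticipate any real obstacle. The only mild subtlety is the possible non-monotonicity of the parameters $t_i$ along $g$, but because the sequence starts at $\alpha$ and ends at $\beta$, the intermediate-value argument above ensures that it passes within bounded distance of every vertex of $g$.
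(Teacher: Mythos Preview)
Your proof is correct and follows essentially the same strategy as the paper's: both invoke Lemma~\ref{hausdorff distance} to place $P(\alpha,\beta)$ inside $N_{\C(S)}(g,R)$ and then run a connectedness argument along the geodesic. The paper phrases this by splitting $g$ at an arbitrary vertex $\gamma$ and locating a vertex of $P$ in the overlap $N_{\C(S)}(g_0,R+1)\cap N_{\C(S)}(g_1,R+1)$, whereas you project the vertices of $P$ onto $g$ and apply an intermediate-value step to the resulting parameters; these are minor variants of the same standard argument (and your version in fact yields the marginally sharper bound $2R+1$).
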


\begin{proof}
This uses a well known connectedness argument.
From Lemma \ref{hausdorff distance}, $P(\alpha, \beta)$ is contained in $N_{\C(S)}(g, R)$.
Take any vertex $\gamma$ in $g$.
Let $g_0$ be the subpath of $g$ from $\alpha$ to $\gamma$ and $g_1$ the subpath from $\gamma$ to $\beta$.
Then the three sets $N_{\C(S)}(g_0, R+1)$, $N_{\C(S)}(g_1, R+1)$ and $P(\alpha, \beta)$ intersect in at least one vertex, say $\delta$.
Let $\gamma_0$ in $g_0$ and $\gamma_1$ in $g_1$ be such that $d_S(\gamma_0, \delta) \le R+1$ and $d_S(\gamma_1, \delta) \le R+1$.
Now $d_S(\gamma_0, \gamma_1) \le 2R+2$ and $\gamma$ is in the (geodesic) subpath of $g$ from $\gamma_0$ to $\gamma_1$, so $d_S(\gamma, \gamma_i) \le R+1$ for either $i=0$ or $i=1$.
Hence, $d_S(\gamma, \delta) \le 2R+2$.
Since $\gamma$ was an arbitrary vertex in $g$ and $\delta$ is in $P(\alpha, \beta)$, we have $g \subset N_{\C(S)}(P(\alpha, \beta), 2R+2)$.
\end{proof}

Given that $\alpha$ and $\beta$ bound embedded discs in $M$, we now describe how to choose $P(\alpha, \beta)$ so that all curves in the path are also vertices of $\mathcal{D}(M, S)$, following Section 2 of \cite{mm3}.

Assume curves $\alpha$ and $\beta$ are fixed in minimal position and choose a subarc $J \subset \alpha$ containing all points of $\alpha \cap \beta$ in its interior.
Masur and Minsky define several curve replacements, of which we shall need only the following.
A \emph{wave curve replacement} with respect to $(\alpha, \beta, J)$ is the replacement of $\alpha$ and $J$ by $\alpha_1$ and $J_1$ as follows (see Figure \ref{wave}).
Let $w$ be a subarc of $\beta$ with interior disjoint from $\alpha$, and endpoints $p$, $q$ in $J$.
Suppose that $w$ meets the same side of $J$ at both $p$ and $q$; then $w$ is called a \emph{wave}.
Let $J_1$ be the subarc of $J$ with endpoints $p$, $q$.
Define $\alpha_1$ to be the curve $w \cup J_1$.
This is an essential curve since $\alpha$ and $\beta$ are in minimal position, so, in particular, no subarc of $J$ and subarc of $\beta$ can form a bigon.
Where $\text{int}(J) \cap \beta = \emptyset$, we define a curve replacement with respect to $(\alpha, \beta, J)$  by $\alpha_1=\beta$, $J_1=J$.

\begin{figure}[h!]
\centering
\includegraphics[width=0.55\textwidth]{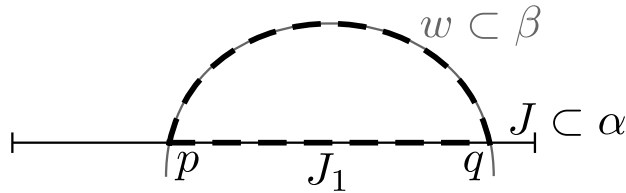}
\caption{A wave curve replacement. The dashed curve is $\alpha_1$.}
\label{wave}
\end{figure}

\begin{remark}
In \cite{mm3}, it is arranged that $\alpha_1$ and $\beta$ must intersect transversely and be in minimal position by requiring an additional condition on the wave $w$ and by slightly isotoping $w \cup J_1$ to be disjoint from $w$.
However, this will not be necessary here, so we choose to simplify the exposition by removing this condition.
\end{remark}

Notice that since $\alpha$ does not intersect $\text{int}(w)$, $i(\alpha, \alpha_1)=0$.
Moreover, $\alpha_1 \cap \beta$ consists of the arc $w$ and a set of points which are all contained in the interior of $J_1$, and $\lvert \beta \cap \text{int}(J_1) \rvert < \lvert \beta \cap \text{int}(J) \rvert$.

As $\alpha$ and $\beta$ are in minimal position, no subarc of $J_1$ can form a bigon with a subarc of $\beta$, so this process can be iterated.
A \emph{nested curve replacement sequence} is a sequence $\lbrace (\alpha_i, J_i) \rbrace$ of curves $\alpha=\alpha_0, \alpha_1, \dots, \alpha_n$ and subarcs $\alpha \supset J_0 \supset J_1 \supset \dots \supset \nolinebreak J_n$, such that $J_0$ contains all points of $\alpha \cap \beta$, and such that $\alpha_{i+1}$ and $J_{i+1}$ are obtained by a curve replacement with respect to $(\alpha_i, \beta, J_i)$.
We always have $i(\alpha_i, \alpha_{i+1})=0$, as for $\alpha$ and $\alpha_1$.
Observe that all curves $\alpha_i$ in this sequence are bicorn curves between $\alpha$ and $\beta$, since the nested intervals ensure that they are formed from exactly one arc of $\alpha$ and one of $\beta$.

The following is a case of Proposition 2.1 of \cite{mm3}.
We include a proof for completeness, with the minor modification of the slightly different curve replacements.

\begin{proposition} \label{discprop}
Let $S$ be a boundary component of a compact, orientable 3-manifold $M$, and let $\alpha$ and $\beta$ be two curves in $S$ in minimal position, each of which bounds an embedded disc in $M$.
Let $J_0 \subset \alpha$ be a subarc containing all points of $\alpha \cap \beta$.
Then there exists a nested curve replacement sequence $\lbrace (\alpha_i, J_i) \rbrace$, with $\alpha_0=\alpha$, such that:
\begin{itemize}
\item each $\alpha_i$ bounds an embedded disc in $M$,
\item the sequence terminates with $\alpha_n=\beta$.
\end{itemize}
\end{proposition}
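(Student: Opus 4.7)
The plan is induction on $n = |\beta \cap \text{int}(J_0)|$. The base case $n=0$ gives $\alpha \cap \beta = \emptyset$ by the hypothesis on $J_0$, and we terminate the sequence with the final (non-wave) replacement $\alpha_1 = \beta$, $J_1 = J_0$.

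For the inductive step, I would fix embedded discs $D_\alpha, D_\beta \subset M$ bounded by $\alpha, \beta$ and put them in general position so that $D_\alpha \cap D_\beta$ is a $1$-submanifold. The standard innermost disc surgery eliminates all simple closed curve components of $D_\alpha \cap D_\beta$ without changing $\alpha$ or $\beta$; the remaining intersections are arcs with endpoints in $\alpha \cap \beta \subset J_0$. Pick an outermost arc $c$ in $D_\beta$: it cuts off a subdisc $E \subset D_\beta$ with $\partial E = c \cup w$, where $w$ is a subarc of $\beta$ whose interior is disjoint from $\alpha$ by outermostness. Let $J_1$ be the subarc of $J_0$ whose endpoints are the endpoints $p, q$ of $w$, and let $E'$ be the subdisc of $D_\alpha$ with $\partial E' = c \cup J_1$. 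Gluing $E$ and $E'$ along $c$ produces a topologically embedded disc in $M$ with boundary $\alpha_1 = w \cup J_1$, so $\alpha_1$ bounds an embedded disc in $M$ (after smoothing along $c$, or via Dehn's lemma).

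The main obstacle is showing that $w$ is a wave with respect to $J_0$, i.e., that $w$ meets $\alpha$ from the same local side at $p$ and $q$. I would deduce this from an orientation argument. Orienting $D_\alpha, D_\beta$ and $M$, the intersection $D_\alpha \cap D_\beta$ becomes an oriented $1$-manifold, so the endpoints of the oriented arc $c$ contribute with opposite signs to its boundary. These signs agree, up to a common convention, with the algebraic intersection signs $\epsilon_p, \epsilon_q$ of $\alpha$ and $\beta$ at $p, q$ in $S$, giving $\epsilon_p = -\epsilon_q$. A short local check (tracking which side of $\alpha$ one sees just after $p$ and just before $q$ along $\beta$) then shows that this opposite-sign condition is exactly equivalent to $w$ entering $\alpha$ from the same local side at both endpoints. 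This orientation trick is the one ingredient that would not have worked from the connectedness of $\text{int}(w) \subset S \setminus \alpha$ alone, since $\alpha$ need not separate $S$.

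Since $|\beta \cap \text{int}(J_1)| < |\beta \cap \text{int}(J_0)|$, the inductive hypothesis applies to $(\alpha_1, \beta, J_1)$, after isotoping $\alpha_1$ into minimal position with $\beta$ by pushing its $w$-arc slightly off $\beta$ (leaving $J_1 \subset \alpha$ intact, and preserving the disc-bounding property and the isotopy class of $\alpha_1$). Prepending $\alpha_0 = \alpha$ to the resulting sequence from $\alpha_1$ to $\beta$ yields the desired nested curve replacement sequence.
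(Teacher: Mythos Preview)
Your surgery argument is essentially the same as the paper's: take outermost arcs of $D_\alpha\cap D_\beta$ in $D_\beta$ and glue the resulting half-discs to get a disc bounded by the wave replacement $\alpha_1=w\cup J_1$. The paper also notes (without your orientation computation) that the outermost half-disc $E$ lies locally to one side of $D_\alpha$ along the connected arc $c$, which immediately forces $w$ to meet $J_0$ from the same side at $p$ and $q$; your sign argument is correct but heavier than necessary.

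The genuine gap is in the inductive packaging. You want to re-apply the proposition to $(\alpha_1,\beta,J_1)$, which requires $\alpha_1$ and $\beta$ to be in minimal position. Pushing the $w$-arc off $\beta$ does \emph{not} guarantee this: there can be a bigon whose $\alpha_1$-side runs through the pushed-off $w'$ and small pieces of $J_1$ near $p,q$. (This is exactly why the paper's Remark says that in \cite{mm3} an extra condition on $w$ and a specific isotopy are needed to force minimal position.) Moreover, once you isotope, the first step $\alpha\to\alpha_1'$ is no longer literally a wave curve replacement, since $w'\not\subset\beta$, so the concatenated sequence is not a nested curve replacement sequence as defined. The paper avoids both problems by iterating directly rather than inducting: it keeps $\alpha$, $\beta$ and the discs $A,B$ fixed throughout, tracks $A_i=D_i\cup E_i$ with $D_i\subset A$, $E_i\subset B$, and at each stage chooses an outermost piece of $B\setminus A_i$. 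The only ``no bigon'' fact ever needed is that subarcs of $J_i\subset\alpha$ form no bigon with $\beta$, which follows once and for all from the minimal position of the original $\alpha,\beta$. Recasting your argument as this direct iteration (or, equivalently, inducting on the weaker hypothesis ``no subarc of $J_0$ bounds a bigon with $\beta$'') closes the gap.
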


\begin{proof}
Suppose that $\alpha$ and $\beta$ bound properly embedded discs $A$ and $B$ respectively.
We can assume that $A$ and $B$ intersect transversely, so their intersection locus is a collection of properly embedded arcs and simple closed curves.
Furthermore, we can remove any simple closed curve components by repeatedly performing surgeries along innermost discs, so that $A$ and $B$ intersect only in properly embedded arcs.
We will perform surgeries on these discs to get a sequence of discs $A_i$ with $\partial A_i = \alpha_i$.
Throughout the surgeries, we will keep $A$ and $B$ fixed, and each $A_i$, except $A_0=A$ and $A_n=B$, will be a union of exactly one subdisc of each of $A$ and $B$.

Suppose the sequence is constructed up to $\alpha_i = \partial A_i$.
If $\beta \cap \alpha_i$ is empty, then $\alpha_{i+1}=\beta = \partial B$ by definition, so the sequence is finished.

\begin{figure}[h!]
\centering
\includegraphics[width=0.5\textwidth]{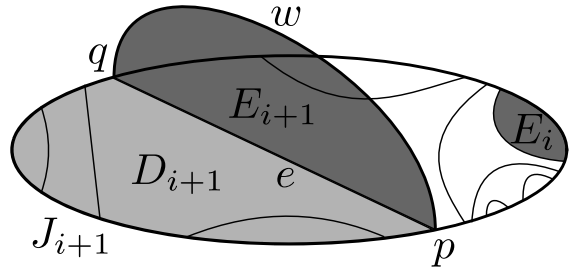}
\caption{The disc surgeries of Proposition \ref{discprop}. The horizontal disc is $A_i$, shown with arcs of intersection with $B$.}
\label{discsurgery}
\end{figure}

Suppose $\beta$ intersects $\alpha_i$ (as illustrated in the example of Figure \ref{discsurgery}).
Let $A_i = D_i \cup E_i$, where $D_i$ is a subdisc of $A$ and $E_i$ is a subdisc of $B$.
If $i=0$, then $E_i$ is empty.
If $i>0$, let $J_i$ be the arc of $\partial D_i$ which is contained in $\partial A_i$.
Any point of intersection of $\beta$ and $J_i$ is an endpoint of an arc of intersection of $B$ and $D_i$.
Let $E_{i+1}$ be a disc in $B$ such that the boundary of $E_{i+1}$ is made up of an arc $e$ in $\text{int}(D_i) \cap B$ and a subarc $w$ of $\beta$, and such that the interior of $E_{i+1}$ is disjoint from $A_i$ (that is, $E_{i+1}$ is an outermost component of $B \setminus (A_i \cap B)$).
This in particular means that the interior of $w$ is disjoint from $\alpha_i$, that the endpoints $p$, $q$ of $w$ lie in the interior of $J_i$, and that $w$ meets the same side of $J_i$ at both of these endpoints, so $w$ is a wave.
Let $J_{i+1}$ be the subarc of $J_i$ with endpoints $p$, $q$.
Let $D_{i+1}$ be the disc in $A_i$ bounded by $e \cup J_{i+1}$.
This disc is contained in $D_i$ and hence in $A$.
The curve $w \cup J_{i+1}$, with interval $J_{i+1}$, is the wave curve replacement $\alpha_{i+1}$ obtained from $(\alpha_i, \beta, J_i)$, and it is also the boundary of the embedded disc $A_{i+1}=D_{i+1} \cup E_{i+1}$.

Since at each stage $\lvert \beta \cap \text{int}(J_i) \rvert$ decreases, this terminates with $\lvert \beta \cap \text{int}(J_{n-1}) \rvert=0$ and \mbox{$\alpha_n=\beta$.} 
\end{proof}

This sequence defines the vertices of a path $P(\alpha, \beta)$ in $\C(S)$, with these vertices contained in both $\mathcal{D}(M, S)$ and $\Theta(\alpha, \beta)$.
By Lemma \ref{claim3}, there exists $K$, independent of $S$, $\alpha$ and $\beta$, such that any geodesic $g$ joining $\alpha$ and $\beta$ in $\C(S)$ is contained in the closed $K$\hyp{}neighbourhood of $P(\alpha, \beta)$.
Hence, $g$ is contained in the closed $K$\hyp{}neighbourhood of $\D(M, S)$, completing the proof of Theorem \ref{uniform}.

\end{document}